\newtheorem{theorem}{Theorem}
\newtheorem{result}{Result}
\newtheorem{definition}{Definition}
\newenvironment{proof}[1][Proof]{\noindent\textbf{#1.} }{\ \rule{0.5em}{0.5em}}
\numberwithin{equation}{section}
\numberwithin{theorem}{section}
\numberwithin{corollary}{section}
\numberwithin{definition}{section}
\numberwithin{result}{section}
\begin{document}
\title{Timelike Bertrand Curves in Semi-Euclidean Space}
\author{Soley ERSOY and Murat TOSUN\\
sersoy@sakarya.edu.tr and tosun@sakarya.edu.tr\\
Department of Mathematics, Faculty of Arts and Sciences,\\
Sakarya University, Sakarya, 54187 TURKEY}

\maketitle
\begin{abstract}
In this paper, it is proved that, no special timelike Frenet curve
is a Bertrand curve in $\mathbb{E}_2^4$ and also, in
$\mathbb{E}_\nu^{n+1}$ $ \left( {n \ge 3} \right)$, such that the
notion of Bertrand curve is definite only in $\mathbb{E}_1^2$ and
$\mathbb{E}_1^3$. Therefore, a generalization of timelike Bertrand
curve is defined and called as timelike (1,3)-Bertrand curve in
$\mathbb{E}_2^4$. Moreover, the characterization of timelike
(1,3)-Bertrand curve is given in $\mathbb{E}_2^4$.

\textbf{Mathematics Subject Classification (2010).} 53B30, 53A35,
53A04.

\textbf{Keywords}: Bertrand curve, Semi-Euclidean Space.\\
\end{abstract}

\section{Introduction}\label{S:intro}
In Euclidean ambient, a curve $C$ is called $ C^\infty -$special
Frenet curve in $\mathbb{E}^3$ if there exist three $ C^\infty
-$vector fields, that is, the unit vector tangent field ${\bf t}$,
the unit principal normal vector field  ${\bf n}$ and the unit
binormal vector field  ${\bf b}$ and two $ C^\infty -$scalar
functions, that is, the curvature function $ k_1 \left( { > 0}
\right)$ and torsion function $k_2 \left( { \ne 0} \right)$, \cite{Wo}.\\
In $\mathbb{E}^3$, a $ C^\infty -$special Frenet curve $C$ is
called a Bertrand curve if there exist another $ C^\infty
-$special Frenet curve $\bar C$ and a $ C^\infty -$mapping  $
\varphi :C \to \bar C$, such that the principal normal vector
fields of $C$ and $\bar C$ coincide at the corresponding points, \cite{Ma}.\\
If we focus our attention to timelike curves in three dimensional
Minkowski space $\mathbb{E}_1^3$, then we can give the well known
theorem that a $ C^\infty -$special timelike Frenet curve is a
Bertrand curve if and only if its curvature function $k_1$ and
torsion function $k_2$ satisfy $ak_1  + bk_2  = 1$ for all $s \in
L$, where $a$ and $b$ are constant real numbers. This theorem
suffices to define the timelike curve
\[
{\bf \bar c}\left( {\bar s} \right) = {\bf c}\left( {\varphi
\left( s \right)} \right) = {\bf c}\left( s \right) + \alpha {\bf
n}\left( s \right)
\]
then it is immediate that $\bar C$ is the timelike Bertrand mate
curve of $C$,\cite{Ba,Lo}. As a result of this theorem, in
$\mathbb{E}_1^3$ every timelike circular helix is an example of
timelike Bertrand curve. Moreover, any timelike planar curve is a
timelike Bertrand curve whose Bertrand mates are parallel curves
of $C$, \cite{Ba}.\\
In this paper, we prove that there is no $ C^\infty -$special
timelike Frenet curve which is a Bertrand curve in
$\mathbb{E}_2^4$ and also, in $\mathbb{E}_\nu^{n+1}$ $ \left( {n
\ge 3} \right)$. Thus, the notion of timelike Bertrand curve
stands only in $\mathbb{E}_1^2$ and $\mathbb{E}_1^3$. According to
this reason, we suggest an idea of generalization of timelike
Bertrand curve in $\mathbb{E}_2^4$.

\section{Preliminaries}\label{Sec2}
To meet the requirements in the next sections, the basic elements
of the theory of curves in the semi Euclidean space
$\mathbb{E}_2^4$ are briefly presented in this section. A more
complete elementary
treatment can be found in \cite{On}.\\
Semi-Euclidean space $\mathbb{E}_2^4$ is an Euclidean space
provided with standard flat metric given by
\[
g =  - dx_1^2  - dx_2^2  + dx_3^2  + dx_4^2
\]
where $\left( {x_1 ,x_2 ,x_3 ,x_4 } \right)$ is a rectangular
coordinate system in $\mathbb{E}_2^4$.\\
Since $g$ is an indefinite metric, recall that a vector ${\bf v}
\in \mathbb{E}_2^4$ can have one of three Lorentzian causal
characters: it can be spacelike if $g\left( {{\bf v},{\bf v}}
\right) > 0$ or ${\bf v}=0$, timelike if $g\left( {{\bf v},{\bf
v}} \right) < 0$ and null (lightlike) if $g\left( {{\bf v},{\bf
v}} \right) = 0$ and ${\bf v} \ne 0$, \cite{On}.\\
Similarly, an arbitrary curve ${\bf c} = {\bf c}\left( s \right)$
in $\mathbb{E}_2^4$ can locally be spacelike, timelike or null
(lightlike) if all of its velocity vectors $ {\bf c}'\left( s
\right)$ are, respectively, spacelike, timelike or null
(lightlike). The norm of ${\bf v} \in \mathbb{E}_2^4$ is defined
as $\left\| {\bf v} \right\| = \sqrt {\left| {g\left( {{\bf
v},{\bf v}} \right)} \right|}$. Therefore, ${\bf v}$ is a unit
vector if $g\left( {{\bf v},{\bf v}} \right) =  \mp 1$.
Furthermore, vectors ${\bf v}$ and ${\bf w}$ are said to be
orthogonal if $g\left( {{\bf v},{\bf w}} \right) = 0$. The
velocity of the curve $C$ is given by $ \left\| {{\bf c}'}
\right\|$. Thus, a timelike curve $C$ is said to be parametrized
by arc length function $s$ if  $g\left( {{\bf c}',{\bf c}'}
\right) = -1$, \cite{On}.\\
Let $ \left\{ {{\bf t}\left( s \right),{\bf n}_1 \left( s
\right),{\bf n}_2 \left( s \right),{\bf n}_3 \left( s \right)}
\right\}$ denotes the moving Frenet frame along $C$ in the
semi-Euclidean space $\mathbb{E}_2^4$, then ${\bf t},\;{\bf n}_1
,\;{\bf n}_2$, and ${\bf n}_3$ are called the tangent, the
principal normal, the first binormal, and the second
binormal vector fields of $C$, respectively.\\
Let $C$ be a  $ C^\infty -$special timelike Frenet curve with
timelike principal normal, spacelike first binormal and second
binormal vector fields in $\mathbb{E}_2^4$, parametrized by arc
length function $s$. Moreover, non-zero $ C^\infty -$scalar
functions $k_1$, $k_2$, and $k_3$ be the first, second, and third
curvatures of $C$, respectively. Then for the $ C^\infty -$special
timelike Frenet curve $C$, the following Frenet formula is given
by
\begin{equation}\label{2.1}
\left[ {\begin{array}{*{20}c}
   {{\bf t'}}  \\
   {{\bf n'}_{\bf 1} }  \\
   {{\bf n'}_{\bf 2} }  \\
   {{\bf n'}_{\bf 3} }  \\
\end{array}} \right] = \left[ {\begin{array}{*{20}c}
   0 & { - k_1 } & 0 & 0  \\
   {k_1 } & 0 & {k_2 } & 0  \\
   0 & {k_2 } & 0 & {k_3 }  \\
   0 & 0 & { - k_3 } & 0  \\
\end{array}} \right]\left[ {\begin{array}{*{20}c}
   {\bf t}  \\
   {{\bf n}_{\bf 1} }  \\
   {{\bf n}_{\bf 2} }  \\
   {{\bf n}_{\bf 3} }  \\
\end{array}} \right]
\end{equation}
where $ {\bf t},\;{\bf n}_1 ,\;{\bf n}_2 ,\;{\bf n}_3$ mutually
orthogonal vector fields satisfying
\[
{\rm g}\left( {{\bf t},{\bf t}} \right) = {\rm  g}\left( {{\bf
n}_1 ,{\bf n}_1 } \right) =  - 1\quad ,\;\quad {\rm g}\left( {{\bf
n}_2 ,{\bf n}_2 } \right) = {\rm g}\left( {{\bf n}_3 ,{\bf n}_3 }
\right) = 1
\]
(for the semi-Euclidean space $\mathbb{E}_\nu^{n+1}$, see
\cite{Co}).

\section{Timelike Bertrand Curve in $\mathbb{E}_2^4$}\label{Sec3}
The following two theorems related to Bertrand curves in
$\mathbb{E}_1^2$ and $\mathbb{E}_1^3$ are well known \cite{Ba,Lo}.

\begin{theorem}\label{T3.1.}
In $\mathbb{E}_1^2$, every timelike $ C^\infty -$planar curve is a
Bertrand curve,\cite{Ba}.
\end {theorem}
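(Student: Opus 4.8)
The plan is to exhibit an explicit Bertrand mate for an arbitrary timelike planar curve and verify the defining property directly. First I would set up the Frenet apparatus in $\mathbb{E}_1^2$: for a unit-speed timelike curve $\mathbf{c}(s)$ the tangent $\mathbf{t} = \mathbf{c}'$ satisfies $g(\mathbf{t},\mathbf{t}) = -1$, so its derivative is orthogonal to it and the unit normal $\mathbf{n}$ must be spacelike, $g(\mathbf{n},\mathbf{n}) = 1$. Differentiating the orthonormality relations $g(\mathbf{t},\mathbf{t})=-1$, $g(\mathbf{n},\mathbf{n})=1$, and $g(\mathbf{t},\mathbf{n})=0$ forces the two-dimensional Frenet equations $\mathbf{t}' = k\,\mathbf{n}$ and $\mathbf{n}' = k\,\mathbf{t}$, where $k$ is the curvature function; the coincidence of coefficients follows from the mixed relation together with the causal characters of $\mathbf{t}$ and $\mathbf{n}$.

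Next I would define the candidate mate by the parallel construction $\bar{\mathbf{c}}(s) = \mathbf{c}(s) + \alpha\,\mathbf{n}(s)$ with $\alpha$ a real constant, mirroring the mate formula quoted in the introduction. Differentiating and inserting $\mathbf{n}' = k\,\mathbf{t}$ gives $\bar{\mathbf{c}}' = (1+\alpha k)\,\mathbf{t}$, so the velocity of $\bar C$ is a scalar multiple of $\mathbf{t}$. Hence the tangent of $\bar C$ is parallel to that of $C$, and therefore the principal normal of $\bar C$ — being the unit vector orthogonal to its tangent in the plane — coincides with $\mathbf{n}$. This verifies the Bertrand condition with the correspondence $\varphi$ induced by the shared parameter $s$.

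The one genuine point to check is that $\bar C$ is an admissible special timelike Frenet curve, that is, that it is regular and timelike. Since $\bar{\mathbf{c}}' = (1+\alpha k)\,\mathbf{t}$ and $\mathbf{t}$ is unit timelike, $\bar C$ has the same causal character as $C$ and is regular precisely when $1 + \alpha k \neq 0$; choosing $\alpha$ so as to avoid the zeros of $1+\alpha k$ on the relevant interval secures this. I expect this regularity bookkeeping to be the only obstacle, and it is mild. The essential reason the theorem holds is dimensional: in the plane the normal line is determined by the tangent line alone, so preserving the tangent direction automatically preserves the normal direction, and the Bertrand condition is satisfied for free. This is precisely the degeneracy that fails in higher dimensions, which is what motivates the nonexistence results announced in the introduction.
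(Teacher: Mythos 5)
The paper never actually proves Theorem~\ref{T3.1.}: it is imported as a known result from \cite{Ba}, so there is no internal proof to compare you against. Judged on its own terms, your argument is correct and is the standard one. The Frenet equations you derive, $\mathbf{t}' = k\,\mathbf{n}$ and $\mathbf{n}' = k\,\mathbf{t}$, are right for a unit-speed timelike curve in $\mathbb{E}_1^2$ (the normal is necessarily spacelike, and the coefficient matching follows from the mixed orthogonality relation exactly as you say), and the parallel curve $\bar{\mathbf{c}} = \mathbf{c} + \alpha\,\mathbf{n}$ with $\alpha$ constant then has velocity $(1+\alpha k)\,\mathbf{t}$, so at corresponding points the normal line of $\bar C$ is the line through $\bar{\mathbf{c}}(s)$ with direction $\mathbf{n}(s)$, which coincides with the normal line of $C$ at $\mathbf{c}(s)$ because the displacement $\alpha\,\mathbf{n}(s)$ lies along that very line. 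Your construction is exactly the mate formula the paper quotes in its introduction for $\mathbb{E}_1^3$, and your closing structural remark --- that in the plane preserving the tangent direction preserves the normal line for free --- is the correct explanation; it is the two-dimensional shadow of the paper's proof of Theorem~\ref{T3.3.}, where the obstruction in $\mathbb{E}_2^4$ is precisely the term $k_3(s)\sinh\theta(s)$, which has no analogue in dimension two.

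Two points of bookkeeping deserve tightening. First, a Bertrand mate must be distinct from $C$ (this requirement is explicit in Definition~\ref{D3.1.}), so you must take $\alpha \neq 0$; as written, $\alpha = 0$ also ``avoids the zeros of $1+\alpha k$'' but produces $\bar C = C$. Second, the existence of a single nonzero $\alpha$ with $1 + \alpha k(s) \neq 0$ for all $s$ is guaranteed only when the range of $k$ omits some nonzero value $-1/\alpha$: this is automatic on a compact parameter interval, where $k$ is bounded and any sufficiently small nonzero $\alpha$ works, or for a local statement, but it fails for a curve whose curvature is onto the nonzero reals. This is the usual caveat attached to the classical claim that every plane curve is a Bertrand curve, and it is how the theorem is meant to be read, but your sentence deferring the issue as ``mild'' is where the only genuine mathematical content beyond the computation lives, so it should be stated as a hypothesis (boundedness of $k$, or locality) rather than waved off.
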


\begin{theorem}\label{T3.2.}
In $\mathbb{E}_1^3$, a $ C^\infty -$special timelike Frenet curve
with first and second curvatures $k_1$ and $k_2$ is a timelike
Bertrand curve if and only if there exist a linear relation  $ak_1
+ bk_2  = 1$ for all $s \in L$, where $ a,b$ are nonzero constant
real numbers, \cite{Ba}.
\end {theorem}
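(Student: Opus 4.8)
The plan is to prove both implications by working with the timelike Frenet apparatus $\left\{ {\bf t},{\bf n},{\bf b} \right\}$ of $C$ in $\mathbb{E}_1^3$, where ${\bf t}$ is timelike and ${\bf n},{\bf b}$ are spacelike, so that the Frenet equations read ${\bf t}' = k_1{\bf n}$, ${\bf n}' = k_1{\bf t} + k_2{\bf b}$, and ${\bf b}' = -k_2{\bf n}$. In either direction the Bertrand mate is parametrized as $\bar{\bf c}(s) = {\bf c}(s) + \alpha(s){\bf n}(s)$, and the central object is the hyperbolic angle $\theta$ between the (timelike) tangents ${\bf t}$ and $\bar{\bf t}$, written as $\bar{\bf t} = \cosh\theta\,{\bf t} + \sinh\theta\,{\bf b}$ since $\bar{\bf t}$ lies in the timelike plane $\mathrm{span}\left\{ {\bf t},{\bf b} \right\}$.

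For the necessity (Bertrand $\Rightarrow$ linear relation), I would first exploit that the principal normals agree, $\bar{\bf n} = \pm{\bf n}$, so $\bar{\bf t}$ is orthogonal to ${\bf n}$. Differentiating $\bar{\bf c} = {\bf c}+\alpha{\bf n}$ and reading off the ${\bf n}$-component forces $\alpha' = 0$, i.e. $\alpha$ is constant; the surviving tangential part gives $\bar{\bf t}\,\tfrac{d\bar s}{ds} = (1+\alpha k_1){\bf t} + \alpha k_2{\bf b}$, hence $\tanh\theta = \alpha k_2/(1+\alpha k_1)$. Next I would differentiate $\bar{\bf t} = \cosh\theta\,{\bf t}+\sinh\theta\,{\bf b}$ and substitute the Frenet equations: the ${\bf t}$- and ${\bf b}$-components of $\bar{\bf t}'$ are $\theta'\sinh\theta$ and $\theta'\cosh\theta$, while $\bar{\bf t}'$ must be parallel to $\bar{\bf n}=\pm{\bf n}$; since $\cosh\theta\neq0$ this yields $\theta'=0$. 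Thus $\tanh\theta$ equals a constant $c$, and clearing denominators in $\alpha k_2 = c(1+\alpha k_1)$ produces the affine relation $-\alpha k_1 + (\alpha/c)k_2 = 1$, that is $ak_1+bk_2=1$ with $a=-\alpha$ and $b=\alpha/c$. The constants are nonzero because $\alpha\neq0$ (otherwise $\bar C=C$) and $c\neq0$ (otherwise $\tanh\theta=0$ forces $k_2\equiv0$, contradicting the special-Frenet hypothesis).

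For the sufficiency (linear relation $\Rightarrow$ Bertrand), I would define the candidate mate $\bar{\bf c} = {\bf c} - a{\bf n}$ (taking $\alpha=-a$). Then $\bar{\bf c}' = (1-ak_1){\bf t} - ak_2{\bf b}$, and the role of the hypothesis is precisely the cancellation it forces: rewriting $1-ak_1 = bk_2$ from $ak_1+bk_2=1$ gives $\tanh\theta = -ak_2/(1-ak_1) = -a/b$, a constant. With $\theta$ constant, differentiating $\bar{\bf t}=\cosh\theta\,{\bf t}+\sinh\theta\,{\bf b}$ leaves only the normal component $\bar{\bf t}' = (k_1\cosh\theta - k_2\sinh\theta){\bf n}$, so $\bar{\bf n}$ is parallel to ${\bf n}$ and the two principal normals coincide at corresponding points, exhibiting $\bar C$ as the Bertrand mate of $C$.

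The step I expect to be the main obstacle is not the algebra but the bookkeeping forced by the indefinite metric: I must verify that $\bar{\bf c}$ is a genuine $C^\infty$ special timelike Frenet curve, which requires the causal/regularity condition $g(\bar{\bf c}',\bar{\bf c}') = -(1-ak_1)^2 + a^2k_2^2 < 0$ so that $\bar{\bf t}$ is timelike, together with $\bar k_1 = \lvert k_1\cosh\theta-k_2\sinh\theta\rvert/\lVert\bar{\bf c}'\rVert > 0$ and $\bar k_2 \neq 0$. I also need to track the two sign ambiguities, namely the time-orientation of $\bar{\bf t}$ (the choice $\pm\cosh\theta$) and the choice $\bar{\bf n}=\pm{\bf n}$, and to confirm that the hyperbolic-angle representation is legitimate, i.e. that ${\bf t}$ and $\bar{\bf t}$ share a time-cone so that no spacelike or null degeneracy arises within $\mathrm{span}\left\{ {\bf t},{\bf b} \right\}$.
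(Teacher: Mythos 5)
A preliminary remark: the paper never proves Theorem 3.2 --- it is quoted as a known result from \cite{Ba} --- so there is no internal proof to compare against; the closest internal analogues are the proofs of Theorem 3.3 and Theorem 4.1, and your machinery (offset $\bar{\bf c}={\bf c}+\alpha{\bf n}$ along the common normal, forcing $\alpha'=0$, the hyperbolic decomposition $\bar{\bf t}=\cosh\theta\,{\bf t}+\sinh\theta\,{\bf b}$, then differentiating and killing the ${\bf t}$- and ${\bf b}$-components) is exactly the technique the paper uses there. Your necessity half is correct as written: $\theta'=0$ follows because $\cosh\theta\neq 0$, and $a=-\alpha$, $b=\alpha/\tanh\theta_0$ are nonzero for the reasons you give; note also that this half automatically produces $|a/b|=|\tanh\theta_0|<1$.

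The sufficiency half has a genuine gap, and it sits exactly where you deferred the issue as ``bookkeeping.'' Using $1-ak_1=bk_2$, your candidate mate satisfies $\bar{\bf c}'=k_2\left(b\,{\bf t}-a\,{\bf b}\right)$, hence
\[
g\left(\bar{\bf c}',\bar{\bf c}'\right)=k_2^2\left(a^2-b^2\right),
\]
so $\bar C$ is timelike iff $a^2<b^2$, spacelike iff $a^2>b^2$, and null iff $a^2=b^2$. Nothing in the hypothesis $ak_1+bk_2=1$ constrains the relative size of $a$ and $b$, so this condition cannot be ``verified'' --- it can simply fail. Concretely, take a timelike curve with non-constant $k_1$ and $k_2=1-2k_1\neq 0$ (such curves exist by the fundamental theorem of curves in $\mathbb{E}_1^3$): it satisfies the relation with $(a,b)=(2,1)$, its candidate mate is spacelike, and since non-constant curvatures admit at most one linear relation of this form, your own necessity argument (which forces $|a|<|b|$) shows this curve has no timelike Bertrand mate at all. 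Thus ``relation $\Rightarrow$ timelike Bertrand curve'' is false as literally stated; the correct sufficiency hypothesis must include $a^2<b^2$, precisely the constraint that necessity delivers for free, just as the paper's Theorem 4.1 carries the condition $\gamma\in\left(-\infty,-1\right)\cup\left(1,\infty\right)$ through both directions of its characterization. (If one instead reads \cite{Ba} as allowing nonnull mates of either causal character, the spacelike mate handles $a^2>b^2$, but the null case $a^2=b^2$ still breaks sufficiency, so some causal condition on $(a,b)$ is unavoidable.) Two further loose ends in the same half: with $\theta$ constant one gets $\bar k_1\propto|bk_1+ak_2|$, which is not automatically nonzero and must be assumed or checked for $\bar C$ to be a special Frenet curve, whereas $\bar k_2\propto|ak_1+bk_2|=1$ is nonzero for free.
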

Now, let us investigate Bertrand curves in $\mathbb{E}_2^4$.

\begin{definition}\label{D3.1.}
A $C^\infty -$special timelike Frenet curve $C$ $\left( {C:L \to
\mathbb{E}_2^4 } \right)$ is called timelike Bertrand curve if
there exist an another $C^\infty -$special timelike Frenet curve
${\bar C}$ $\left( {{\bar C}:{\bar L} \to \mathbb{E}_2^4 }
\right)$, distinct from $C$, and a regular $C^\infty -$map  $
\varphi :L \to \bar L$, $\left( {\bar s = \varphi \left( s
\right),\;\frac{{d\varphi \left( s \right)}}{{ds}} \ne 0\quad {\rm
for}\;\;{\rm all}\;\;s \in L} \right)$, such that curve has the
same 1-normal line at each pair of corresponding points ${\bf
c}\left( s \right)$ and ${\bf \bar c}\left( {\bar s} \right) =
{\bf \bar c}\left( {\varphi \left( s \right)} \right)$ under
$\varphi$. Here, $s$ and $\bar s$ are arc length parameters of
timelike curves $C$ and $\bar C$, respectively. In this case,
$\bar C$ is called a timelike Bertrand mate of $C$.
\end{definition}

\begin{theorem}\label{T3.3.}
In $\mathbb{E}_2^4$, no $ C^\infty -$special timelike Frenet curve
is a Bertrand curve.
\end {theorem}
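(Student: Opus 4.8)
The plan is to suppose, for contradiction, that a $C^\infty$-special timelike Frenet curve $C$ is a Bertrand curve with mate $\bar C$, and to exploit the defining condition that $C$ and $\bar C$ share the same $1$-normal line (the line through $\mathbf{c}(s)$ spanned by $\mathbf{n}_1$) at corresponding points. First I would write the standard ansatz
\[
\bar{\mathbf{c}}(\bar s) = \mathbf{c}(s) + \alpha(s)\,\mathbf{n}_1(s),
\]
where $\alpha$ is a $C^\infty$-function, since the correspondence point $\bar{\mathbf{c}}(\bar s)$ must lie on the shared $1$-normal line through $\mathbf{c}(s)$. The immediate next step is to show $\alpha$ is constant: differentiating and using the Frenet equations \eqref{2.1}, I would compute $\bar{\mathbf{c}}'$ and express it in the moving frame $\{\mathbf{t},\mathbf{n}_1,\mathbf{n}_2,\mathbf{n}_3\}$ of $C$. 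Since $\bar{\mathbf{n}}_1$ must be parallel to $\mathbf{n}_1$ (the normals coincide), the tangent $\bar{\mathbf{t}}$ of $\bar C$ is orthogonal to $\mathbf{n}_1$, which forces the $\mathbf{n}_1$-component of $\bar{\mathbf{c}}'$ to vanish; that component is $\alpha'$, so $\alpha' = 0$ and $\alpha$ is a constant.

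With $\alpha$ constant, differentiating $\bar{\mathbf{c}}(\bar s) = \mathbf{c}(s) + \alpha\,\mathbf{n}_1(s)$ and applying \eqref{2.1} gives
\[
\frac{d\bar s}{ds}\,\bar{\mathbf{t}} = (1 + \alpha k_1)\,\mathbf{t} + \alpha k_2\,\mathbf{n}_2.
\]
The crux of the argument lives here. Since $\bar{\mathbf{n}}_1 = \pm\,\mathbf{n}_1$, the frame vectors $\bar{\mathbf{t}},\bar{\mathbf{n}}_2,\bar{\mathbf{n}}_3$ of $\bar C$ must all be orthogonal to $\mathbf{n}_1$, hence lie in $\mathrm{span}\{\mathbf{t},\mathbf{n}_2,\mathbf{n}_3\}$. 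I would then differentiate the relation $\bar{\mathbf{n}}_1 = \pm\,\mathbf{n}_1$ and use the Frenet formula for $\bar C$, namely $\bar{\mathbf{n}}_1' = \bar k_1\,\bar{\mathbf{t}} + \bar k_2\,\bar{\mathbf{n}}_2$ (up to the signs dictated by the causal characters), reparametrized by $ds$. Equating this with $\pm\,\mathbf{n}_1' = \pm(k_1\,\mathbf{t} + k_2\,\mathbf{n}_2)$ shows that $\bar{\mathbf{n}}_2$ itself must lie in $\mathrm{span}\{\mathbf{t},\mathbf{n}_2\}$. The key step is to track the $\mathbf{n}_3$-component through the third curvature: the presence of $k_3 \neq 0$ in \eqref{2.1} produces an $\mathbf{n}_3$-term somewhere in the frame equations of $\bar C$ that cannot be matched, forcing a contradiction.

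Concretely, the main obstacle—and the heart of the proof—is to show that the constancy of $\alpha$ together with the shared-normal condition over-determines the system. I expect that after normalizing $\bar{\mathbf{t}}$ from the displayed expression and computing $\bar{\mathbf{t}}'$ via the chain rule, the $\mathbf{n}_3$-component of $\bar{\mathbf{t}}'$ turns out to be a nonzero multiple of $\alpha k_2 k_3$, while the Frenet equation $\bar{\mathbf{t}}' \propto \bar k_1\,\bar{\mathbf{n}}_1 = \pm\,\bar k_1\,\mathbf{n}_1$ has zero $\mathbf{n}_3$-component. Since $k_3 \neq 0$ by hypothesis, this forces $\alpha k_2 = 0$, and then $\alpha = 0$ (as $k_2 \neq 0$), which makes $\bar C$ coincide with $C$, contradicting the requirement in Definition \ref{D3.1.} that $\bar C$ be distinct from $C$. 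Alternatively, if $\alpha = 0$ is excluded from the start, the vanishing of the $\mathbf{n}_3$-component directly contradicts $k_3 \neq 0$. Either way the contradiction establishes that no such Bertrand curve exists; the argument extends verbatim to $\mathbb{E}_\nu^{n+1}$ for $n \ge 3$ because the obstruction is precisely the nonvanishing higher curvature $k_3$, which is absent only in dimensions $2$ and $3$.
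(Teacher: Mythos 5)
Your proposal is correct and follows essentially the same route as the paper: write $\bar{\mathbf c}=\mathbf c+\alpha\,\mathbf n_1$, show $\alpha'=0$ from orthogonality of $\bar{\mathbf t}$ to the shared normal, then observe that the $\mathbf n_3$-component of $\bar{\mathbf t}'$ is $k_3\sinh\theta\propto\alpha k_2 k_3$ while $\bar{\mathbf t}'\parallel\bar{\mathbf n}_1\parallel\mathbf n_1$ forces it to vanish, so $\alpha=0$ and $\bar C=C$. The middle paragraph's detour through $\bar{\mathbf n}_2$ is unnecessary, but the decisive computation in your final paragraph is exactly the paper's.
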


\begin{proof}
Let $ \left( {C,\bar C} \right)$ be a mate of Bertrand curve in
$\mathbb{E}_2^4$. Also, the pair of ${\bf c}\left( s \right)$ and
${\bf \bar c}\left( {\bar s} \right)$ be the corresponding points
of $C$ and $\bar C$, respectively. Then for all $s \in L$ the
curve $\bar C$ is given by
\begin{equation}\label{3.1}
{\bf \bar c}\left( {\bar s} \right) = {\bf c}\left( {\varphi
\left( s \right)} \right) = {\bf c}\left( s \right) + \alpha
\left( s \right){\bf n}_1 \left( s \right)
\end{equation}
where $\alpha$ is $C^\infty -$function on $L$. By differentiating
the equation (\ref{3.1}) with respect to $s$, then
\[
\varphi '\left( s \right)\left. {\frac{{d{\bf \bar c}\left( {\bar
s} \right)}}{{d\bar s}}} \right|_{\bar s = \varphi \left( s
\right)}  = {\bf c'}\left( s \right) + \alpha '\left( s
\right){\bf n}_1 \left( s \right) + \alpha \left( s \right){\bf
n'}_1 \left( s \right)
\]
is obtained. Here and hereafter, the subscript prime denotes the
differentiation with respect to $s$. By using the Frenet formulas,
it is seen that
\[
\varphi '\left( s \right){\bf \bar t}\left( {\varphi \left( s
\right)} \right) = \left( {1 + \alpha \left( s \right)k_1 \left( s
\right)} \right){\bf t}\left( s \right) + \alpha '\left( s
\right){\bf n}_1 \left( s \right) + \alpha \left( s \right)k_2
\left( s \right){\bf n}_2 \left( s \right).
\]
Considering ${\bf n}_1 \left( s \right)$ and ${\bf \bar n}_1
\left( {\varphi \left( s \right)} \right)$ are coincident and
$g\left( {{\bf \bar t}\left( {\varphi \left( s \right)}
\right),{\bf n}_1 \left( s \right)} \right) = 0 $ for all $s \in
L$, we get
\[
\alpha '\left( s \right) = 0,
\]
that is, $\alpha$ is a constant function on $L$. Thus, the
Bertrand mate of $C$ can be rewritten as
\begin{equation}\label{3.2}
{\bf \bar c}\left( {\bar s} \right) = {\bf c}\left( {\varphi
\left( s \right)} \right) = {\bf c}\left( s \right) + \alpha {\bf
n}_1 \left( s \right).
\end{equation}
The differentiation of this last equation with respect to $s$ is
\begin{equation}\label{3.3}
\varphi '\left( s \right){\bf \bar t}\left( {\varphi \left( s
\right)} \right) = \left( {1 + \alpha k_1 \left( s \right)}
\right){\bf t}\left( s \right) + \alpha k_2 \left( s \right){\bf
n}_2 \left( s \right)
\end{equation}
for all $s \in L$.\\
By the fact that $C$ and $\bar C$ are timelike curves, the tangent
vector field of Bertrand mate of $C$ can be given by
\begin{equation}\label{3.4}
{\bf \bar t}\left( {\varphi \left( s \right)} \right) = \cosh
\theta \left( s \right){\bf t}\left( s \right) + \sinh \theta
\left( s \right){\bf n}_2 \left( s \right),
\end{equation}
where $\theta$ is a hyperbolic angle between the timelike tangent
vector fields ${\bf \bar t}\left( {\varphi \left( s \right)}
\right) $ and ${\bf t}\left(s \right) $. According to the
equations (\ref{3.3}) and (\ref{3.4}), the hyperbolic functions
are defined by
\begin{equation}\label{3.5}
\cosh \theta \left( s \right) = {{\left( {1 + \alpha k_1 \left( s
\right)} \right)} \mathord{\left/
 {\vphantom {{\left( {1 + \alpha k_1 \left( s \right)} \right)} {\varphi '\left( s \right)}}} \right.
 \kern-\nulldelimiterspace} {\varphi '\left( s \right)}},
\end{equation}
\begin{equation}\label{3.6}
\sinh \theta \left( s \right) = {{\alpha k_2 \left( s \right)}
\mathord{\left/
 {\vphantom {{\alpha k_2 \left( s \right)} {\varphi '\left( s \right)}}} \right.
 \kern-\nulldelimiterspace} {\varphi '\left( s \right)}}.
\end{equation}
By differentiating the equation (\ref{3.4}) and applying Frenet
formulas,
\[
\begin{array}{l}
  - \varphi '\left( s \right)\bar k_1 \left( {\varphi \left( s \right)} \right){\bf \bar n}_1 \left( {\varphi \left( s \right)} \right) = \frac{{d\left( {\cosh \theta \left( s \right)} \right)}}{{ds}}{\bf t}\left( s \right) \\
\quad\quad \quad \quad \quad \quad \quad \quad \quad \quad \quad \quad  + \left( { - k_1 \left( s \right)\cosh \theta \left( s \right) + k_2 \left( s \right)\sinh \theta \left( s \right)} \right){\bf n}_1 \left( s \right) \\
\quad\quad \quad \quad \quad \quad \quad \quad \quad \quad \quad \quad  + \frac{{d\left( {\sinh \theta \left( s \right)} \right)}}{{ds}}{\bf n}_2 \left( s \right) + k_3 \left( s \right)\sinh \theta \left( s \right){\bf n}_3 \left( s \right) \\
 \end{array}
\]
is obtained. Since  ${\bf n}_1\left(s \right) $ is coincident with
${\bf \bar n}_1 \left( {\varphi \left( s \right)} \right) $, from
the above equation, it is seen that
\[
k_3 \left( s \right)\sinh \theta \left( s \right) = 0.
\]
If we notice that $k_3$ is different from zero, then $\sinh \theta
\left( s \right) = 0$. Considering the equation (\ref{3.6}) and
$k_2 \ne 0$, then  $\alpha=0$. In that time the equation
(\ref{3.2}) implies that $\bar C$ is coincident with $C$. This is
a contradiction. So, the proof is complete.\end{proof}\\
Also, in the same way we can generalize this theorem to
semi-Euclidean space $\mathbb{E}_\nu^{n+1}$ $ \left( {n \ge 3}
\right)$ by taking into consideration the Frenet formulas in
$\mathbb{E}_\nu^{n+1}$. The following result can be easily proved.

\begin{result}\label{R 3.1.}
There is no $C^\infty -$special Frenet curve which is a Bertrand
curve in $\mathbb{E}_\nu^{n+1}$, $ \left( {n \ge 3} \right)$.
\end{result}

\section{Timelike $\left( {1,3} \right) -$Bertrand Curve in $\mathbb{E}_2^4$}\label{Sec4}

According to previous section, there is no Bertrand curve in
semi-Euclidean space. Therefore, let us suggest an idea of a
generalization of Bertrand curve in $\mathbb{E}_2^4$.

\begin{definition}\label{D4.1.}
Let $C$ and $\bar C$ be $C^\infty -$special timelike Frenet curves
in $\mathbb{E}_2^4$ and
\[
\begin{array}{*{20}c}
   {\varphi :L \to \bar L}  \\
   {\quad \quad \quad \quad \;s \to \varphi \left( s \right) = \bar s}  \\
\end{array}\quad \quad \left( {\frac{{d\varphi \left( s \right)}}{{ds}} \ne 0\quad {\rm for}\;\;{\rm all}\;\;s \in L} \right)
\]
be a regular $C^\infty -$map at the corresponding points ${\bf
c}\left( s \right)$ and ${\bf \bar c}\left( {\bar s} \right) =
{\bf \bar c}\left( {\varphi \left( s \right)} \right)$ of $C$ and
$\bar C$, respectively. If the Frenet $\left( {1,3} \right)
-$normal plane at the each point of $C$ is coincident with the
Frenet $\left( {1,3} \right) -$normal plane at the corresponding
point ${\bf \bar c}\left( {\bar s} \right)$ of $\bar C$, then $C$
is called a timelike $\left( {1,3} \right) -$Bertrand curve in
$\mathbb{E}_2^4$. Also, $\bar C$ is called a timelike $\left(
{1,3} \right) -$Bertrand mate of $C$, which is given by
\begin{equation}\label{4.1}
{\bf \bar c}\left( {\bar s} \right) = {\bf c}\left( {\varphi
\left( s \right)} \right) = {\bf c}\left( s \right) + \alpha
\left( s \right){\bf n}_1 \left( s \right) + \beta \left( s
\right){\bf n}_3 \left( s \right),
\end{equation}
where $\alpha$ and $\beta$ are $C^\infty -$functions on $L$.
\end{definition}
The following theorem gives us a characterization of timelike
$\left( {1,3} \right) -$Bertrand curve in $\mathbb{E}_2^4$.

\begin{theorem}\label{T4.1.}
Let $C$ be a $C^\infty -$special timelike Frenet curve with
non-zero curvatures $k_1$, $k_2$, and $k_3$ in $\mathbb{E}_2^4$.
Then $C$ is a timelike $\left( {1,3} \right) -$Bertrand curve if
and only if there exist the constant real numbers $ \alpha
,\;\beta ,\;\gamma ,\;\delta$ satisfying
\[
\begin{array}{l}
 i.)\quad \;\;\alpha k_2 \left( s \right) - \beta k_3 \left( s \right) \ne 0 \\
 ii.)\quad \;\gamma \left( {\alpha k_2 \left( s \right) - \beta k_3 \left( s \right)} \right) - \alpha k_1 \left( s \right) = 1 \\
 iii.)\quad \delta k_3 \left( s \right) =  - \gamma k_1 \left( s \right) + k_2 \left( s \right) \\
 iv.)\quad \left( {\gamma ^2  + 1} \right)k_1 \left( s \right)k_2 \left( s \right) - \gamma \left( {k_1^2 \left( s \right) + k_2^2 \left( s \right) - k_3^2 \left( s \right)} \right) \ne 0 \\
 \end{array}
\]
for all $s \in L$.
\end {theorem}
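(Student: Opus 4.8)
The plan is to treat the statement as a biconditional and to run the forward direction as a sequence of differentiations of the defining relation \eqref{4.1}, extracting one of the four conditions at each stage, then reverse the chain for the converse. Throughout, the governing principle is that the coincidence of the Frenet $(1,3)$-normal planes means $\bar{\mathbf n}_1(\varphi(s))$ and $\bar{\mathbf n}_3(\varphi(s))$ lie in $\mathrm{span}\{\mathbf n_1(s),\mathbf n_3(s)\}$, so that $\bar{\mathbf t}$ and $\bar{\mathbf n}_2$ lie in the orthogonal complement $\mathrm{span}\{\mathbf t,\mathbf n_2\}$. First I would differentiate \eqref{4.1} and apply \eqref{2.1} to obtain
\[
\varphi'\,\bar{\mathbf t} = (1+\alpha k_1)\,\mathbf t + \alpha'\,\mathbf n_1 + (\alpha k_2-\beta k_3)\,\mathbf n_2 + \beta'\,\mathbf n_3 .
\]
Since $\bar{\mathbf t}$ is orthogonal to the common $(1,3)$-plane, taking $g(\cdot,\mathbf n_1)$ and $g(\cdot,\mathbf n_3)$ and using $g(\mathbf n_1,\mathbf n_1)=-1$, $g(\mathbf n_3,\mathbf n_3)=1$ forces $\alpha'=\beta'=0$, so $\alpha,\beta$ are constants, exactly as in Theorem~\ref{T3.3.}.

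Next I would exploit the causal characters. Because $\mathbf t,\bar{\mathbf t}$ are timelike and $\bar{\mathbf t}\in\mathrm{span}\{\mathbf t,\mathbf n_2\}$, I write $\bar{\mathbf t}=\cosh\theta\,\mathbf t+\sinh\theta\,\mathbf n_2$, which yields $\varphi'\cosh\theta=1+\alpha k_1$ and $\varphi'\sinh\theta=\alpha k_2-\beta k_3$; condition (i) is precisely $\sinh\theta\neq0$, guaranteeing $\bar C\neq C$ and a well-defined angle. Differentiating this expression for $\bar{\mathbf t}$ and using that $\bar{\mathbf t}'$ is parallel to $\bar{\mathbf n}_1$ (hence lies in the $(1,3)$-plane) kills the $\mathbf t$- and $\mathbf n_2$-components, giving $\theta'=0$, so $\theta$ is constant. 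Setting $\gamma=\coth\theta$ and eliminating $\varphi'$ between the two scalar relations produces condition (ii), while the surviving normal components give $-\varphi'\bar k_1\,\bar{\mathbf n}_1=(k_2\sinh\theta-k_1\cosh\theta)\,\mathbf n_1+k_3\sinh\theta\,\mathbf n_3$.

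I would then parametrize the common plane and its complement by $\bar{\mathbf n}_1=\cosh\psi\,\mathbf n_1+\sinh\psi\,\mathbf n_3$, $\bar{\mathbf n}_3=\sinh\psi\,\mathbf n_1+\cosh\psi\,\mathbf n_3$, and $\bar{\mathbf n}_2=\sinh\theta\,\mathbf t+\cosh\theta\,\mathbf n_2$ (these respect the signature, which one checks directly). Differentiating $\bar{\mathbf n}_1$ and matching against $\varphi'(\bar k_1\bar{\mathbf t}+\bar k_2\bar{\mathbf n}_2)$ forces $\psi'=0$, so $\psi$ is constant; setting $\delta=\coth\psi$, the $\bar{\mathbf n}_1$-relation rewrites as condition (iii), $\delta k_3=-\gamma k_1+k_2$. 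Finally, differentiating $\bar{\mathbf n}_2$ and solving for $\bar k_2$ gives $\varphi'\bar k_2=\sinh\theta\sinh\psi\,(\gamma\delta k_2-\delta k_1-\gamma k_3)$; substituting $\delta=(k_2-\gamma k_1)/k_3$ from (iii) collapses the bracket to $-\big[(\gamma^2+1)k_1k_2-\gamma(k_1^2+k_2^2-k_3^2)\big]/k_3$, so $\bar k_2\neq0$ is equivalent to condition (iv). (The analogous computations show $\varphi'\bar k_1\propto k_3(1-\delta^2)$ and $\varphi'\bar k_3\propto k_1(1-\gamma^2)$, both automatically nonzero since $|\gamma|,|\delta|>1$.) For the converse I would run these identities backward: define $\bar C$ by \eqref{4.1} with the given constants, read off the frame from the hyperbolic parametrizations, and verify that $\bar{\mathbf n}_1,\bar{\mathbf n}_3\in\mathrm{span}\{\mathbf n_1,\mathbf n_3\}$, with (i)–(iv) ensuring $\bar C$ is a genuine special timelike Frenet curve with nonvanishing curvatures.

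The main obstacle I anticipate is not any single computation but the consistent bookkeeping of causal characters and signs: justifying each hyperbolic-angle parametrization requires knowing which spanning vectors are timelike versus spacelike, and the vanishing arguments rely on reading off components against an indefinite metric. The one genuinely delicate algebraic point is the last step, where condition (iv) only appears after (iii) is used to eliminate $\delta$ from the expression for $\bar k_2$; getting that simplification to land exactly on $(\gamma^2+1)k_1k_2-\gamma(k_1^2+k_2^2-k_3^2)$ is where I would slow down and check every sign.
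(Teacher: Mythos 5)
Your plan follows the paper's proof essentially step for step: differentiate \eqref{4.1} and use the coincidence of the $(1,3)$-normal planes to force $\alpha'=\beta'=0$; introduce hyperbolic angles for $\bar{\bf t}$ in $\mathrm{span}\{{\bf t},{\bf n}_2\}$ and for $\bar{\bf n}_1,\bar{\bf n}_3$ in $\mathrm{span}\{{\bf n}_1,{\bf n}_3\}$; show each angle is constant by differentiating and reading off components; set $\gamma=\coth\tau_0$ and $\delta=\coth\eta_0$ to obtain \textit{ii.)} and \textit{iii.)}; and extract \textit{iv.)} from $\bar k_2\neq 0$. The converse is the same explicit construction the paper carries out. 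So the route is not different; but there is one genuine gap in the necessity direction.

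The gap is your justification of condition \textit{i.)}. You correctly observe that \textit{i.)} is equivalent to $\sinh\theta\neq 0$ (via $\varphi'\sinh\theta=\alpha k_2-\beta k_3$), but you justify $\sinh\theta\neq 0$ by saying it guarantees $\bar C\neq C$ and a well-defined angle. Neither reason is a derivation: distinctness of $\bar C$ from $C$ is already secured by $(\alpha,\beta)\neq(0,0)$ and does not force $\alpha k_2-\beta k_3\neq 0$, and $\theta=0$ is a perfectly well-defined hyperbolic angle. Since $\gamma=\coth\theta$ and the division producing \textit{ii.)} both require $\sinh\theta\neq 0$, this step is load-bearing. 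The paper's argument is the one you are missing: if $\sinh\tau_0=0$ then $\bar{\bf t}=\mathbf t$, and differentiating this identity gives $\bar k_1\varphi'\,\bar{\bf n}_1=k_1\,{\bf n}_1$, so the principal normal lines of $C$ and $\bar C$ coincide and $(C,\bar C)$ would be an ordinary timelike Bertrand pair in $\mathbb{E}_2^4$ --- contradicting Theorem \ref{T3.3.}. This is the one place in the necessity proof where the nonexistence result of Section \ref{Sec3} must be invoked; without it, condition \textit{i.)} does not follow. The remainder of your outline, including the elimination of $\delta$ via \textit{iii.)} to land on the expression in \textit{iv.)}, matches the paper's computation.
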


\begin{proof}
Firstly, let us prove the necessary condition of the theorem. Let
$C$ be a timelike $\left( {1,3} \right) -$Bertrand curve
parametrized by arc length $s$ in $\mathbb{E}_2^4$. Then the
timelike Bertrand mate $\bar C$ of $C$ is given by (\ref{4.1}).
Substituting the Frenet formulas into the differentiation of the
equation (\ref{4.1}) with respect to $s$, we get
\[
\begin{array}{l}
 \varphi '\left( s \right){\bf \bar t}\left( {\varphi \left( s \right)} \right) = \left( {1 + \alpha \left( s \right)k_1 \left( s \right)} \right){\bf t}\left( s \right) + \alpha '\left( s \right){\bf n}_1 \left( s \right) \\
\quad \quad \quad \quad \quad \quad \quad  + \left( {\alpha \left( s \right)k_2 \left( s \right) - \beta \left( s \right)k_3 \left( s \right)} \right){\bf n}_2 \left( s \right) + \beta '\left( s \right){\bf n}_3 \left( s \right) \\
 \end{array}
\]
for all $s \in L$.\\
Since ${\bf n}_1$ and ${\bf \bar n}_1$ principal normal vector
fields of $C$ and $\bar C$ are timelike and the plane spanned by
${\bf n}_1$ and ${\bf n}_3$ is coincident with the plane spanned
by ${\bf \bar n}_1$ and ${\bf \bar n}_3$, then we have
\begin{equation}\label{4.2}
{\bf \bar n}_1 \left( {\varphi \left( s \right)} \right) = \cosh
\theta \left( s \right){\bf n}_1 \left( s \right) + \sinh \theta
\left( s \right){\bf n}_3 \left( s \right)
\end{equation}
\begin{equation}\label{4.3}
{\bf \bar n}_3 \left( {\varphi \left( s \right)} \right) = \sinh
\theta \left( s \right){\bf n}_1 \left( s \right) + \cosh \theta
\left( s \right){\bf n}_3 \left( s \right)
\end{equation}
where $ \sinh \theta \left( s \right) \ne 0$ for all $s \in L$ and
$\theta$ is a hyperbolic angle between the timelike vector fields
${\bf n}_1$ and ${\bf \bar n}_1$.\\
By considering differentiation of the equation (\ref{4.1}) and the
last two equations, we see
\[
\begin{array}{l}
 g\left( {\varphi '\left( s \right){\bf \bar t}\left( {\varphi \left( s \right)} \right),{\bf \bar n}_1 \left( {\varphi \left( s \right)} \right)} \right) = \alpha '\left( s \right)\cosh \theta \left( s \right) + \beta '\left( s \right)\sinh \theta \left( s \right) = 0 \\
 g\left( {\varphi '\left( s \right){\bf \bar t}\left( {\varphi \left( s \right)} \right),{\bf \bar n}_3 \left( {\varphi \left( s \right)} \right)} \right) = \alpha '\left( s \right)\sinh \theta \left( s \right) + \beta '\left( s \right)\cosh \theta \left( s \right) = 0 \\
 \end{array}
\]
and since $ \sinh \theta \left( s \right) \ne 0$,
\[
\alpha '\left( s \right) = 0\quad {\rm and}\quad \beta '\left( s
\right) = 0
\]
that is, $\alpha$ and $\beta$ are constant functions on $L$. Thus,
the equation (\ref{4.1}) is rewritten as
\begin{equation}\label{4.4}
{\bf \bar c}\left( {\bar s} \right) = {\bf c}\left( {\varphi
\left( s \right)} \right) = {\bf c}\left( s \right) + \alpha {\bf
n}_1 \left( s \right) + \beta {\bf n}_3 \left( s \right)
\end{equation}
and its differentiation with respect to $s$ is
\begin{equation}\label{4.5}
\varphi '\left( s \right){\bf \bar t}\left( {\varphi \left( s
\right)} \right) = \left( {1 + \alpha k_1 \left( s \right)}
\right){\bf t}\left( s \right) + \left( {\alpha k_2 \left( s
\right) - \beta k_3 \left( s \right)} \right){\bf n}_2 \left( s
\right)
\end{equation}
Since  ${\bf \bar t}$ and ${\bf t}$ are timelike tangent vector
fields of $C$ and $\bar C$, then
\begin{equation}\label{4.6}
 - \left( {\varphi '\left( s \right)} \right)^2  =  - \left( {1 + \alpha k_1 \left( s \right)} \right)^2  + \left( {\alpha k_2 \left( s \right) - \beta k_3 \left( s \right)}
 \right)^2.
\end{equation}
Thus, we can write
\begin{equation}\label{4.7}
{\bf \bar t}\left( {\varphi \left( s \right)} \right) = \cosh \tau
\left( s \right){\bf t}\left( s \right) + \sinh \tau \left( s
\right){\bf n}_2 \left( s \right)
\end{equation}
and
\[
\cosh \tau \left( s \right) = {{\left( {1 + \alpha k_1 \left( s
\right)} \right)} \mathord{\left/
 {\vphantom {{\left( {1 + \alpha k_1 \left( s \right)} \right)} {\varphi '\left( s \right)}}} \right.
 \kern-\nulldelimiterspace} {\varphi '\left( s \right)}},
\]

\[
\sinh \tau \left( s \right) = {{\left( {\alpha k_2 \left( s
\right) - \beta k_3 \left( s \right)} \right)} \mathord{\left/
 {\vphantom {{\left( {\alpha k_2 \left( s \right) - \beta k_3 \left( s \right)} \right)} {\varphi '\left( s \right)}}} \right.
 \kern-\nulldelimiterspace} {\varphi '\left( s \right)}},
\]
where $\tau$ is a hyperbolic angle between the timelike tangent
vector fields ${\bf \bar t}\left( {\varphi \left( s \right)}
\right)$ and ${\bf t}\left( s \right)$ of $C$ and $\bar C$. By
differentiating the equation (\ref{4.7}) with respect to   and
applying Frenet formulas,
\[
\begin{array}{l}
  - \varphi '\left( s \right)\bar k_1 \left( {\varphi \left( s \right)} \right){\bf \bar n}_1 \left( {\varphi \left( s \right)} \right) = \frac{{d\left( {\cosh \tau \left( s \right)} \right)}}{{ds}}{\bf t}\left( s \right) \\
 \quad \quad \quad \quad \quad \quad \quad \quad \quad \quad \quad \quad  + \left( { - k_1 \left( s \right)\cosh \tau \left( s \right) + k_2 \left( s \right)\sinh \tau \left( s \right)} \right){\bf n}_1 \left( s \right) \\
 \quad \quad \quad \quad \quad \quad \quad \quad \quad \quad \quad \quad + \frac{{d\left( {\sinh \tau \left( s \right)} \right)}}{{ds}}{\bf n}_2 \left( s \right) \\
 \quad \quad \quad \quad \quad \quad \quad \quad \quad \quad \quad \quad  + k_3 \left( s \right)\sinh \tau \left( s \right){\bf n}_3 \left( s \right) \\
 \end{array}
\]
is obtained. Since ${\bf \bar n}_1 \left( {\varphi \left( s
\right)} \right)$ is a linear combination ${\bf n}_1 \left( s
\right)$ and ${\bf n}_3 \left( s \right)$, it easily seen that
\[
\frac{{d\left( {\cosh \tau \left( s \right)} \right)}}{{ds}} =
0\quad {\rm and}\quad \frac{{d\left( {\sinh \tau \left( s \right)}
\right)}}{{ds}} = 0,
\]
that is, $\tau$ is a constant function on $L$ with value $\tau
_0$. Thus, we rewrite the equation (\ref{4.7}) as

\begin{equation}\label{4.8}
{\bf \bar t}\left( {\varphi \left( s \right)} \right) = \cosh \tau
_0 {\bf t}\left( s \right) + \sinh \tau _0 {\bf n}_2 \left( s
\right)
\end{equation}
and
\begin{equation}\label{4.9}
\varphi '\left( s \right)\cosh \tau _0  = 1 + \alpha k_1 \left( s
\right),
\end{equation}

\begin{equation}\label{4.10}
\varphi '\left( s \right)\sinh \tau _0  = \alpha k_2 \left( s
\right) - \beta k_3 \left( s \right),
\end{equation}
for all $s \in L$. According to these last two equations, it is
seen that
\begin{equation}\label{4.11}
\left( {1 + \alpha k_1 \left( s \right)} \right)\sinh \tau _0  =
\left( {\alpha k_2 \left( s \right) - \beta k_3 \left( s \right)}
\right)\cosh \tau _0.
\end{equation}
If $\sinh \tau _0  = 0$, then it satisfies $\cosh \tau _0  = 1$
and ${\bf \bar t}\left( {\varphi \left( s \right)} \right) = {\bf
t}\left( s \right)$. The differentiation of this equality with
respect to $s$ is
\[
 - \bar k_1 \left( {\varphi \left( s \right)} \right)\varphi '\left( s \right){\bf \bar n}_1 \left( {\varphi \left( s \right)} \right) =  - k_1 \left( s \right){\bf n}_1 \left( s
 \right),
\]
that is, ${\bf n}_1 \left( s \right)$ is linear dependence with
${\bf \bar n}_1 \left( {\varphi \left( s \right)} \right)$.
According to Theorem \ref{T3.3.}, this is a contradiction. Thus,
only the case of $\sinh \tau _0  \ne 0$ must be considered. The
equation (\ref{4.10}) satisfies
\[
\alpha k_2 \left( s \right) - \beta k_3 \left( s \right) \ne 0,
\]
that is, the relation given in \textit{i.)} is proved. Since
$\sinh \tau _0 \ne 0$, the equation (\ref{4.11}) can be rewritten
as
\[
\frac{{\cosh \tau _0 }}{{\sinh \tau _0 }}\left( {\alpha k_2 \left(
s \right) - \beta k_3 \left( s \right)} \right) - \alpha k_1
\left( s \right) = 1.
\]
Let us denote the constant value $\left( {\cosh \tau _0 }
\right)\left( {\sinh \tau _0 } \right)^{ - 1}$ by the constant
real number $\gamma$, then $\gamma$ is an element of interval
$\left( { - \infty , - 1} \right) \cup \left( {1,\infty } \right)$
and
\[
\gamma \left( {\alpha k_2 \left( s \right) - \beta k_3 \left( s
\right)} \right) - \alpha k_1 \left( s \right) = 1.
\]
This proves the relation \textit{ii.)} of the theorem.\\
By differentiating the equation (\ref{4.8}) with respect to $s$
and applying Frenet formulas, we have
\[
\begin{array}{l}
 \varphi '\left( s \right)\bar k_1 \left( {\varphi \left( s \right)} \right){\bf \bar n}_1 \left( {\varphi \left( s \right)} \right) = \left( {k_1 \left( s \right)\cosh \tau \left( s \right) - k_2 \left( s \right)\sinh \tau \left( s \right)} \right){\bf n}_1 \left( s \right) \\
\,\,\,\,\,\,\,\,\,\,\,\,\,\,\,\,\,\,\,\,\,\,\,\,\,\,\,\,\,\,\,\,\,\,\,\,\,\,\,\,\,\,\,\,\,\,\,\,\,\,\,\,\,\,\,\,\,\,\,\,\,\,\,\,\,\,\, - k_3 \left( s \right)\sinh \tau \left( s \right){\bf n}_3 \left( s \right) \\
 \end{array}
\]
for all $s \in L$.\\ Taking into consideration the equations
(\ref{4.9}), (\ref{4.10}) and the relation \textit{ii.)}, the
above equality satisfy
\[
\left( {\varphi '\left( s \right)\bar k_1 \left( {\varphi \left( s
\right)} \right)} \right)^2  = \left( {\alpha k_2 \left( s \right)
- \beta k_3 \left( s \right)} \right)^2 \left[ {\left( {\gamma k_1
\left( s \right) - k_2 \left( s \right)} \right)^2  - k_3^2 \left(
s \right)} \right]\left( {\varphi '\left( s \right)} \right)^{ -
2}.
\]
From the equation (\ref{4.6}) and the relation \textit{ii.)}, we
get
\begin{equation}\label{4.12}
\left( {\varphi '\left( s \right)} \right)^2  = \left( {\gamma ^2
- 1} \right)\left( {\alpha k_2 \left( s \right) - \beta k_3 \left(
s \right)} \right)^2.
\end{equation}
Thus, we obtain
\begin{equation}\label{4.13}
\left( {\varphi '\left( s \right)\bar k_1 \left( {\varphi \left( s
\right)} \right)} \right)^2  = \frac{1}{{\gamma ^2  - 1}}\left(
{\left( {\gamma k_1 \left( s \right) - k_2 \left( s \right)}
\right)^2  - k_3^2 \left( s \right)} \right)
\end{equation}
where $\gamma  \in \left( { - \infty , - 1} \right) \cup \left(
{1,\infty } \right)$.\\
From the equations (\ref{4.9}), (\ref{4.10}) and the relation
\textit{ii.)}, we can give
\begin{equation}\label{4.14}
{\bf \bar n}_1 \left( {\varphi \left( s \right)} \right) = \cosh
\eta \left( s \right){\bf n}_1 \left( s \right) + \sinh \eta
\left( s \right){\bf n}_3 \left( s \right)
\end{equation}
where
\begin{equation}\label{4.15}
\cosh \eta \left( s \right) = \frac{{\left( {\alpha k_2 \left( s
\right) - \beta k_3 \left( s \right)} \right)\left( {\gamma k_1
\left( s \right) - k_2 \left( s \right)} \right)}}{{\bar k_1
\left( {\varphi \left( s \right)} \right)\left( {\varphi '\left( s
\right)} \right)^2 }},
\end{equation}

\begin{equation}\label{4.16}
\sinh \eta \left( s \right) =  - \frac{{\left( {\alpha k_2 \left(
s \right) - \beta k_3 \left( s \right)} \right)k_3 \left( s
\right)}}{{\bar k_1 \left( {\varphi \left( s \right)}
\right)\left( {\varphi '\left( s \right)} \right)^2 }},
\end{equation}
for all $s \in L$ and $\eta  \in C^\infty   -$function on $L$. By
differentiating the equation (\ref{4.14}) with respect to $s$ and
applying Frenet formulas, we have
\[
\begin{array}{l}
 \varphi '\left( s \right)\bar k_1 \left( {\varphi \left( s \right)} \right){\bf \bar t}\left( {\varphi \left( s \right)} \right)\\
 \quad \quad \quad \quad \quad + \varphi '\left( s \right)\bar k_2 \left( {\varphi \left( s \right)} \right){\bf \bar n}_2 \left( {\varphi \left( s \right)} \right) = k_1 \left( s \right)\cosh \eta \left( s \right){\bf t}\left( s \right)\\
 \quad \quad \quad \quad \quad \quad \quad \quad \quad \quad \quad \quad \quad \quad \quad \quad \quad + \frac{{d\left( {\cosh \eta \left( s \right)} \right)}}{{ds}}{\bf
n}_1 \left( s \right)\\
\quad \quad \quad \quad \quad \quad \quad \quad \quad \quad \quad
\quad \quad \quad \quad \quad \quad + \left( {k_2 \left( s
\right)\cosh \eta \left( s \right) - k_3 \left( s \right)\sinh
\eta \left( s \right)} \right){\bf n}_2
\left( s \right)\\
\quad \quad \quad \quad \quad \quad \quad \quad \quad \quad \quad
\quad \quad \quad \quad \quad \quad +  \frac{{d\left( {\sinh \tau \left( s \right)} \right)}}{{ds}}{\bf n}_3 \left( s \right) \\
 \end{array}
\]
for all $s \in L$ and this satisfies
\[
\frac{{d\left( {\cosh \eta \left( s \right)} \right)}}{{ds}} =
0\quad {\rm and}\quad \frac{{d\left( {\sinh \eta \left( s \right)}
\right)}}{{ds}} = 0,
\]
that is, $\eta$ is a constant function on $L$ with value $\eta
_0$. Let us denote $\left( {\cosh \tau _0 } \right)\left( {\sinh
\tau _0 } \right)^{ - 1}$ by the constant real number $\delta$,
then $\delta  \in \left( { - \infty , - 1} \right) \cup \left(
{1,\infty } \right)$. The ratio of (\ref{4.15})  and (\ref{4.16})
holds
\[
\delta  =  - \frac{{\gamma k_1 \left( s \right) - k_2 \left( s
\right)}}{{k_3 \left( s \right)}},
\]
that is,
\[
\delta k_3 \left( s \right) =  - \gamma k_1 \left( s \right) + k_2
\left( s \right)
\]
for all $s \in L$ and $\delta  \in \left( { - \infty , - 1}
\right) \cup \left( {1,\infty } \right)$. Thus the relation
\textit{iii.)} of the theorem is obtained. Moreover, we can give

\[
\displaylines{
  \varphi '\left( s \right)\bar k_2 \left( {\varphi \left( s \right)} \right){\bf \bar n}_2 \left( {\varphi \left( s \right)} \right) =  - \varphi '\left( s \right)\bar k_1 \left( {\varphi \left( s \right)} \right){\bf \bar t}\left( {\varphi \left( s \right)} \right) + k_1 \left( s \right)\cosh \eta \left( s \right){\bf t}\left( s \right) \cr
   \,\,\,\,\,\,\,\,\,\,\,\,\,\,\,\,\,\,\,\,\,\,\,\,\,\,\,\,\,\,\,\,\,\,\,\,\,\,\,\,\,\,\,\,\,\,\,\,\,\,\,\,\,\,\,\,\,\,\,\,\,\,\, + \left( {k_2 \left( s \right)\cosh \eta \left( s \right) - k_3 \left( s \right)\sinh \eta \left( s \right)} \right){\bf n}_2 \left( s \right). \cr}
\]
If we substitute the equations (\ref{4.5}), (\ref{4.15}) and
(\ref{4.16}) into the above equality, we obtain

\[
\varphi '\left( s \right)\bar k_2 \left( {\varphi \left( s
\right)} \right){\bf \bar n}_2 \left( {\varphi \left( s \right)}
\right) = \left( {\varphi '\left( s \right)} \right)^{ - 2} \left(
{\bar k_1 \left( {\varphi \left( s \right)} \right)} \right)^{ -
1} \left[ {A\left( s \right){\bf t}\left( s \right) + B\left( s
\right){\bf n}_2 \left( s \right)} \right]
\]
where
\[
\begin{array}{l}
 A\left( s \right) =  - \left( {\varphi '\left( s \right)\bar k_1 \left( {\varphi \left( s \right)} \right)} \right)^2 \left( {1 + \alpha k_1 \left( s \right)} \right) \\
 \quad \quad \quad \,\,\,  + k_1 \left( s \right)\;\left( {\alpha k_2 \left( s \right) - \beta k_3 \left( s \right)} \right)\left( {\gamma k_1 \left( s \right) - k_2 \left( s \right)} \right) \\
 \end{array}
\]
and
\[
\begin{array}{l}
 B\left( s \right) = \left( {\varphi '\left( s \right)\bar k_1 \left( {\varphi \left( s \right)} \right)} \right)^2 \;\left( {\alpha k_2 \left( s \right) - \beta k_3 \left( s \right)} \right) \\
 \quad \quad \quad  + \left( {\alpha k_2 \left( s \right) - \beta k_3 \left( s \right)} \right)\left( {\gamma k_1 \left( s \right) - k_2 \left( s \right)} \right)k_2 \left( s \right) \\
 \quad \quad \quad  + \left( {\alpha k_2 \left( s \right) - \beta k_3 \left( s \right)} \right)k_3^2 \left( s \right) \\
 \end{array}
\]
for all $s \in L$. By the relation \textit{ii.)} and the equation
(\ref{4.13}), $A\left( s \right)$ and $B\left( s \right)$ can be
rewritten as;
\[
A\left( s \right) = \left( {\gamma ^2  - 1} \right)^{ - 1} \left(
{\alpha k_2 \left( s \right) - \beta k_3 \left( s \right)}
\right)\left[ {\left( {\gamma ^2  + 1} \right)k_1 \left( s
\right)k_2 \left( s \right) - \gamma \left( {k_1^2 \left( s
\right) + k_2^2 \left( s \right) - k_3^2 \left( s \right)}
\right)} \right]
\]
and
\[
B\left( s \right) = \gamma \left( {\gamma ^2  - 1} \right)^{ - 1}
\left( {\alpha k_2 \left( s \right) - \beta k_3 \left( s \right)}
\right)\left[ {\left( {\gamma ^2  + 1} \right)k_1 \left( s
\right)k_2 \left( s \right) - \gamma \left( {k_1^2 \left( s
\right) + k_2^2 \left( s \right) - k_3^2 \left( s \right)}
\right)} \right]
\]
By the fact that $\varphi '(s)\overline k _2 (\varphi (s)){\bf
\bar n}_2 (\varphi (s)) \ne 0$ for all $s \in L$, it is proved
that
\[
\left( {\gamma ^2  + 1} \right)k_1 \left( s \right)k_2 \left( s
\right) - \gamma \left( {k_1^2 \left( s \right) + k_2^2 \left( s
\right) - k_3^2 \left( s \right)} \right) \ne 0.
\]
This is the relation \textit{iv.)} of theorem.\\
Now, we will prove the sufficient condition of the theorem.\\
Thus, we assume that $C$ is a timelike $C^\infty-$special Frenet
curve in $\mathbb{E}_2^4$ with curvatures $k_1 ,k_2$ and $k_3$
satisfying the relations \textit{i.), ii.), iii.)} and
\textit{iv.)} for the constant real numbers $\alpha ,\,\,\beta
,\,\,\gamma$, and $\delta$.\\
We define a timelike curve $\bar C$ by
\begin{equation}\label{4.17}
{\bf \bar c}(s) = {\bf c}(s) + \alpha {\bf n}_1 \left( s \right) +
\beta {\bf n}_3 \left( s \right)
\end{equation}
where $s$ is the arc length parameter of $C$.\\
By differentiating the equation (\ref{4.17}) with respect to $s$
and applying Frenet formulas,
\[
\frac{{d{\bf \bar c}\left( s \right)}}{{ds}} = \left( {1 + \alpha
k_1 \left( s \right)} \right){\bf t}\left( s \right) + \left(
{\alpha k_2 \left( s \right) - \beta k_3 \left( s \right)}
\right){\bf n}_2 \left( s \right)
\]
is obtained. Considering the relation \textit{ii.)}, the last
equation is rewritten as;
\[
\frac{{d{\bf \bar c}\left( s \right)}}{{ds}} = \left( {\alpha k_2
\left( s \right) - \beta k_3 \left( s \right)} \right)\left(
{\gamma {\bf t}\left( s \right) + {\bf n}_2 \left( s \right)}
\right)
\]
for all $s \in L$. From the relation \textit{i.)} it is seen that
$\bar C$ is regular curve. Thus, arc length parameter of $\bar C$
denoted by $\overline s$ can be given by
\[
\bar s = \varphi \left( s \right) = \int\limits_0^s {\left\|
{\frac{{d{\bf \bar c}\left( t \right)}}{{dt}}} \right\|dt}
\]
where $\varphi :L \to \overline L$ is a regular map.\\
The differentiation of $\varphi$ with respect to $s$ is
\begin{equation}\label{4.18}
\varphi '\left( s \right) = \varepsilon \sqrt {\gamma ^2  - 1}
\left( {\alpha k_2 \left( s \right) - \beta k_3 \left( s \right)}
\right) > 0
\end{equation}
where
\[
\varepsilon  = \left\{ \begin{array}{l}
 \,\,\,1\,\,\,\,\,\,,\,\,\alpha k_2 \left( s \right) - \beta k_3 \left( s \right) > 0 \\
  - 1\,\,\,\,\,,\,\,\alpha k_2 \left( s \right) - \beta k_3 \left( s \right) < 0. \\
 \end{array} \right.
\]
Also, here we notice that $\gamma  \in \left( { - \infty , - 1}
\right) \cup \left( {1,\infty } \right)$ and $\gamma ^2  - 1 > 0$.
Thus the timelike curve $\bar C$ is rewritten as;
\[
{\bf \bar c}\left( {\bar s} \right) = {\bf \bar c}\left( {\varphi
\left( s \right)} \right) = {\bf c}\left( s \right) + \alpha {\bf
n}_1 \left( s \right) + \beta {\bf n}_3 \left( s \right)
\]
for all $s \in L$. Differentiating this equation with respect to
$s$, we get
\[
\left. {\varphi '\left( s \right)\frac{{d{\bf \bar c}\left( {\bar
s} \right)}}{{d\bar s}}} \right|_{\bar s = \varphi (s)}  = \left(
{\alpha k_2 \left( s \right) - \beta k_3 \left( s \right)}
\right)\left( {\gamma {\bf t}\left( s \right) + {\bf n}_2 \left( s
\right)} \right).
\]
Now, let us define a unit vector field $ {\bf \bar t}$ along $\bar
C$ by $\frac{{d{\bf \bar c}\left( {\bar s} \right)}}{{d\bar s}} $,
then
\begin{equation}\label{4.19}
{\bf \bar t}\left( {\varphi \left( s \right)} \right) =
\varepsilon \left( {\gamma ^2  - 1} \right)^{ -
{\raise0.7ex\hbox{$1$} \!\mathord{\left/
 {\vphantom {1 2}}\right.\kern-\nulldelimiterspace}
\!\lower0.7ex\hbox{$2$}}} \left( {\gamma {\bf t}\left( s \right) +
{\bf n}_2 \left( s \right)} \right).
\end{equation}
By differentiating the equation (\ref{4.19}) with respect to $s$
and using Frenet formulas,
\[
\left. {\varphi '\left( s \right)\frac{{d{\bf \bar t}\left(
{\varphi \left( s \right)} \right)}}{{d\bar s}}} \right|_{\bar s =
\varphi (s)}  = \varepsilon \left( {\gamma ^2  - 1} \right)^{ -
{\raise0.7ex\hbox{$1$} \!\mathord{\left/
 {\vphantom {1 2}}\right.\kern-\nulldelimiterspace}
\!\lower0.7ex\hbox{$2$}}} \left[ { - \left( {\gamma k_1 \left( s
\right) - k_2 \left( s \right)} \right){\bf n}_1 \left( s \right)
+ k_3 \left( s \right){\bf n}_3 \left( s \right)} \right]
\]
and
\[
\left\| {\left. {\varphi '\left( s \right)\frac{{d{\bf \bar
t}\left( {\varphi \left( s \right)} \right)}}{{d\bar s}}}
\right|_{\bar s = \varphi (s)} } \right\| = \frac{{\sqrt {\left| {
- \left( {\gamma k_1 \left( s \right) - k_2 \left( s \right)}
\right)^2  + k_3^2 \left( s \right)} \right|} }}{{\varphi
'(s)\sqrt {\gamma ^2  - 1} }}.
\]
By relations $iii.)$, it is seen that
\[
\sqrt {\left| { - \left( {\gamma k_1 \left( s \right) - k_2 \left(
s \right)} \right)^2  + k_3^2 \left( s \right)} \right|}  = \sqrt
{\left| { - \delta ^2  + 1} \right|k_3^2 \left( s \right)}
\]
and we notice that $\delta  \in \left( { - \infty , - 1} \right)
\cup \left( {1,\infty } \right)$ and $\gamma ^2  - 1 > 0$.\\
Thus, we can write
\[
\left\| {\left. {\frac{{d{\bf \bar t}\left( {\varphi \left( s
\right)} \right)}}{{d\bar s}}} \right|_{\bar s = \varphi (s)} }
\right\| = \frac{{\sqrt {\delta ^2  - 1} \,k_3^2 \left( s
\right)}}{{\varphi '\left( s \right)\sqrt {\gamma ^2  - 1} }}.
\]
Since $k_3 \left( s \right) > 0$ and $\varphi '\left( s \right)
> 0$ for all $s \in L$, we obtain
\begin{equation}\label{4.20}
\overline k _1 \left( {\varphi \left( s \right)} \right) = \left\|
{\left. {\frac{{d{\bf \bar t}\left( {\varphi \left( s \right)}
\right)}}{{d\bar s}}} \right|_{\bar s = \varphi (s)} } \right\| >
0.
\end{equation}
Thus, ${\bf \bar n}_1$ timelike unit vector field can be defined
by
\[
\begin{array}{l}
 {\bf \bar n}\left( {\bar s} \right) = {\bf \bar n}_1 \left( {\varphi \left( s \right)} \right) \\
 \quad \quad \; = \, - \frac{1}{{\bar k_1 \left( {\varphi \left( s \right)} \right)}}{\bf \bar t}\left( {\varphi \left( s \right)} \right) \\
 \quad \quad \; = \,\frac{{\left( {\gamma k_1 \left( s \right) - k_2 \left( s \right)} \right){\bf n}_1 \left( s \right) - k_3 \left( s \right){\bf n}_3 \left( s \right)}}{{\varepsilon \sqrt {\left( {\gamma k_1 \left( s \right) - k_2 \left( s \right)} \right)^2  - k_3^2 \left( s \right)} }}. \\
 \end{array}
\]

Also, ${\bf \bar n}_1$  can be given in the form
\begin{equation}\label{4.21}
{\bf \bar n}_1 \left( {\varphi \left( s \right)} \right) = \cosh
\xi \left( s \right){\bf n}_1 \left( s \right) + \sinh \xi \left(
s \right){\bf n}_3 \left( s \right)
\end{equation}
where
\begin{equation}\label{4.22}
\cosh \xi  = \frac{{\gamma k_1 \left( s \right) - k_2 \left( s
\right)}}{{\varepsilon \sqrt {\left( {\gamma k_1 \left( s \right)
- k_2 \left( s \right)} \right)^2  - k_3^2 \left( s \right)} }},
\end{equation}

\begin{equation}\label{4.23}
\sinh \xi  = \frac{{ - k_3 \left( s \right)}}{{\varepsilon \sqrt
{\left( {\gamma k_1 \left( s \right) - k_2 \left( s \right)}
\right)^2  - k_3^2 \left( s \right)} }},
\end{equation}
for all $s \in L$. Here $\xi$ is a $C^\infty$-function on $L$.\\
By differentiating (\ref{4.21})  with respect to $s$ and using the
Frenet formulas, we get
\[
\begin{array}{l}
 \left\| {\left. {\frac{{d{\bf \bar n}_1 \left( {\varphi \left( s \right)} \right)}}{{d\bar s}}} \right|_{\bar s = \varphi (s)} } \right\| = k_1 \left( s \right)\cosh \xi \left( s \right){\bf t}\left( s \right) + \frac{{d\left( {\cosh \xi \left( s \right)} \right)}}{{ds}}{\bf n}_1 \left( s \right) \\
\quad  \quad \quad \quad \quad \quad \quad \quad \; + \left( {k_2 \left( s \right)\cosh \xi \left( s \right) - k_3 \left( s \right)\sinh \xi \left( s \right)} \right){\bf n}_2 \left( s \right) \\
\quad  \quad \quad \quad \quad \quad \quad \quad \; + \frac{{d\left( {\sinh \xi \left( s \right)} \right)}}{{ds}}{\bf n}_3 (s). \\
 \end{array}.
\]
The differentiation of the relation \textit{iii.)} with respect to
$s$ is
\begin{equation}\label{4.24}
\left( {\gamma k'_1 \left( s \right) - k'_2 \left( s \right)}
\right)\,k_3 \left( s \right) + \left( {\gamma k_1 \left( s
\right) - k_2 \left( s \right)} \right)\,k'_3 \left( s \right) = 0
\end{equation}
Substituting the equation (\ref{4.24}), we get
\[
\frac{{d\left( {\cosh \xi \left( s \right)} \right)}}{{ds}} =
0\quad ,\quad \frac{{d\left( {\sinh \xi \left( s \right)}
\right)}}{{ds}} = 0,
\]
that is, $\xi$ is a constant function on $L$ with value $\xi _0$.
Thus, we write
\begin{equation}\label{4.25}
\cosh \xi _0  = \frac{{\gamma k_1 \left( s \right) - k_2 \left( s
\right)}}{{\varepsilon \sqrt {\left( {\gamma k_1 \left( s \right)
- k_2 \left( s \right)} \right)^2  - k_3^2 \left( s \right)} }},
\end{equation}
\begin{equation}\label{4.26}
\sinh \xi _0  = \frac{{ - k_3 \left( s \right)}}{{\varepsilon
\sqrt {\left( {\gamma k_1 \left( s \right) - k_2 \left( s \right)}
\right)^2  - k_3^2 \left( s \right)} }}.
\end{equation}
Then, from the equation (\ref{4.2}), it satisfies
\begin{equation}\label{4.27}
{\bf \bar n}_1 \left( {\varphi \left( s \right)} \right) = \cosh
\xi _0 \left( s \right){\bf n}_1 \left( s \right) + \sinh \xi _0
\left( s \right){\bf n}_3 \left( s \right).
\end{equation}
By considering the equations (\ref{4.19}) and (\ref{4.20}), we
obtain
\[
\overline k _1 \left( {\varphi \left( s \right)} \right)\overline
{\bf t} \left( {\varphi \left( s \right)} \right) = \frac{{\left(
{\gamma k_1 \left( s \right) - k_2 \left( s \right)} \right)^2  -
k_3^2 \left( s \right)}}{{\varepsilon \varphi '\left( s
\right)\left( {\gamma ^2  - 1} \right)\sqrt {\left( {\gamma k_1
\left( s \right) - k_2 \left( s \right)} \right)^2  - k_3^2 \left(
s \right)} }}\left( {\gamma {\bf t}\left( s \right) + {\bf n}_2
\left( s \right)} \right).
\]
Also, by substituting the equations (\ref{4.25}) and (\ref{4.26})
into equation (\ref{4.20}), we get
\[
\begin{array}{l}
 \left. {\frac{{d{\bf \bar n}_1 \left( {\varphi \left( s \right)} \right)}}{{d\bar s}}} \right|_{\bar s = \varphi (s)}  = \frac{{k_1 \left( s \right)\left( {\gamma k_1 \left( s \right) - k_2 \left( s \right)} \right)}}{{\varepsilon \varphi '\left( s \right)\left( {\gamma ^2  - 1} \right)\sqrt {\left( {\gamma k_1 \left( s \right) - k_2 \left( s \right)} \right)^2  - k_3^2 \left( s \right)} }}{\bf t}\left( s \right) \\
 \,\,\,\,\,\,\,\,\,\,\,\,\,\,\,\,\,\,\,\,\,\,\,\,\,\,\,\,\,\,\,\,\,\,\,\,\,\,\,\,\,\,\,\,\,\,\,\,+ \frac{{k_2 \left( s \right)\left( {\gamma k_1 \left( s \right) - k_2 \left( s \right)} \right) + k_3 (s)}}{{\varepsilon \varphi '\left( s \right)\sqrt {\left( {\gamma k_1 \left( s \right) - k_2 \left( s \right)} \right)^2  - k_3^2 \left( s \right)} }}{\bf n}_2 \left( s \right) \\
 \end{array}
\]
for $s \in L$. By the last two equations, we obtain
\[
\left. {\frac{{d{\bf \bar n}_1 \left( {\varphi \left( s \right)}
\right)}}{{d\bar s}}} \right|_{\bar s = \varphi (s)}  - \overline
k _1 \left( {\varphi \left( s \right)} \right)\,\overline {\bf t}
\left( {\varphi \left( s \right)} \right) = \frac{{P\left( s
\right)}}{{R\left( s \right)}}{\bf t}\left( s \right) +
\frac{{Q\left( s \right)}}{{R\left( s \right)}}{\bf n}_2 \left( s
\right)
\]
where
\[
\begin{array}{l}
 P\left( s \right) = \left( {\gamma ^2  + 1} \right)k_1 \left( s \right)k_2 \left( s \right) - \gamma \left( {k_1^2 \left( s \right) + k_2^2 \left( s \right) - k_3^2 \left( s \right)} \right), \\
 Q\left( s \right) = \gamma \left[ {\left( {\gamma ^2  + 1} \right)k_1 \left( s \right)k_2 \left( s \right) - \gamma \left( {k_1^2 \left( s \right) + k_2^2 \left( s \right) - k_3^2 \left( s \right)} \right)} \right], \\
 R\left( s \right) = \varepsilon \varphi '\left( s \right)\left( {\gamma ^2  - 1} \right)\sqrt {\left( {\gamma k_1 \left( s \right) - k_2 \left( s \right)} \right)^2  - k_3^2 \left( s \right)}  \ne 0. \\
 \end{array}
\]
By the fact that
\[
\overline k _2 \left( {\varphi \left( s \right)} \right)\, =
\left\| {\left. {\frac{{d{\bf \bar n}_1 \left( {\varphi \left( s
\right)} \right)}}{{d\bar s}}} \right|_{\bar s = \varphi (s)}  -
\overline k _1 \left( {\varphi \left( s \right)}
\right)\,\overline {\bf t} \left( {\varphi \left( s \right)}
\right)} \right\| > 0
\]
for all $s \in L$, we see
\[
\overline k _2 \left( {\varphi \left( s \right)} \right) =
\frac{{\left| {\left( {\gamma ^2  + 1} \right)k_1 \left( s
\right)k_2 \left( s \right) - \gamma \left( {k_1^2 \left( s
\right) + k_2^2 \left( s \right) - k_3^2 \left( s \right)}
\right)} \right|}}{{\varphi '\left( s \right)\sqrt {\left( {\gamma
^2  - 1} \right)\left[ {\left( {\gamma k_1 \left( s \right) - k_2
\left( s \right)} \right)^2  - k_3^2 \left( s \right)} \right]\,}
}}.
\]
Thus, we can define a unit vector field ${\bf \bar n}_2 \left(
{\bar s} \right)$ along $\overline C$ by
\[
\begin{array}{l}
 {\bf \bar n}_2 \left( {\bar s} \right) = {\bf \bar n}_2 \left( {\varphi \left( s \right)} \right) \\
 \quad \quad \;\;\; = \,\,\frac{1}{{\bar k_2 \left( {\varphi \left( s \right)} \right)}}\left[ {\left. {\frac{{d{\bf \bar n}_1 \left( {\varphi \left( s \right)} \right)}}{{d\bar s}}} \right|_{\bar s = \varphi (s)}  - \overline k _1 \left( {\varphi \left( s \right)} \right)\,\overline {\bf t} \left( {\varphi \left( s \right)} \right)} \right] \\
 \end{array},
\]
such that
\begin{equation}\label{4.28}
{\bf \bar n}_2 \left( {\varphi \left( s \right)} \right) =
\frac{1}{{\varepsilon \sqrt {\gamma ^2  - 1} }}\left( {{\bf
t}\left( s \right) + \gamma {\bf n}_2 \left( s \right)} \right).
\end{equation}
Also, since ${\bf \bar n}_3 \left( {\varphi \left( s \right)}
\right) = \sinh \xi _0 \left( s \right){\bf n}_1 \left( s \right)
+ \cosh \xi _0 \left( s \right){\bf n}_3 \left( s \right)$ for all
$s \in L$, another unit vector field ${\bf \bar n}_3$ along
$\overline C$ can
\begin{equation}\label{4.29}
\begin{array}{l}
 {\bf \bar n}_3 \left( {\bar s} \right) = {\bf \bar n}_3 \left( {\varphi \left( s \right)} \right) \\
 \,\,\,\,\,\,\,\,\,\,\,\,\,\,\,\; = \,\frac{1}{{\varepsilon \sqrt {\left( {\gamma k_1 \left( s \right) - k_2 \left( s \right)} \right)^2  - k_3^2 \left( s \right)} }}\left( { - k_3 \left( s \right){\bf n}_1 \left( s \right) + \left( {\gamma k_1 \left( s \right) - k_2 \left( s \right)} \right){\bf n}_3 \left( s \right)} \right). \\
 \end{array}
\end{equation}
Now, from the equations (\ref{4.19}), (\ref{4.27}), (\ref{4.28})
and (\ref{4.29}), it is seen that
\[
\det \left[ {{\bf \bar t}\left( {\varphi \left( s \right)}
\right),\,{\bf \bar n}_1 \left( {\varphi \left( s \right)}
\right),\,\,{\bf \bar n}_2 \left( {\varphi \left( s \right)}
\right),\,{\bf \bar n}_3 \left( {\varphi \left( s \right)}
\right)} \right] = \det \left[ {{\bf t}\left( s \right),\,{\bf
n}_1 \left( s \right),\,\,{\bf n}_2 \left( s \right),\,{\bf n}_3
\left( s \right)} \right] = 1.
\]
${\bf \bar t},\,\;{\bf \bar n}_1 ,\,\,{\bf \bar n}_2$ and ${\bf
\bar n}_3$ are mutually orthogonal vector fields satisfying
\[
g\left( {{\bf \bar t}\left( {\varphi \left( s \right)}
\right),{\bf \bar t}\left( {\varphi \left( s \right)} \right)}
\right) = g\left( {{\bf \bar n}_1 \left( {\varphi \left( s
\right)} \right),{\bf \bar n}_1 \left( {\varphi \left( s \right)}
\right)} \right) =  - 1,
\]
\[
\,g\left( {{\bf \bar n}_2 \left( {\varphi \left( s \right)}
\right),{\bf \bar n}_2 \left( {\varphi \left( s \right)} \right)}
\right) = g\left( {{\bf \bar n}_3 \left( {\varphi \left( s
\right)} \right),{\bf \bar n}_3 \left( {\varphi \left( s \right)}
\right)} \right) = 1.
\]
Thus the tetrahedron $\left\{ {{\bf \bar t},\,{\bf \bar n}_1
,\,\,{\bf \bar n}_2 ,\,{\bf \bar n}_3 } \right\}$ along $\overline
C$ is an orthonormal frame where ${\bf \bar t}$ and $\,{\bf \bar
n}_1$ are timelike vector fields, $\,{\bf \bar n}_2$ and $\,{\bf
\bar n}_3$ are spacelike vector fields.\\
On the other hand, by considering the equations (\ref{4.29}) and
the differentiation of the equation (\ref{4.28}), we obtain
\[
\begin{array}{l}
 \overline k _3 \left( {\varphi \left( s \right)} \right) =  - \left\langle {\left. {\frac{{d{\bf \bar n}_2 \left( {\varphi \left( s \right)} \right)}}{{d\bar s}}} \right|_{\bar s = \varphi (s)} ,{\bf \bar n}_3 \left( {\varphi \left( s \right)} \right)} \right\rangle  \\
 \,\,\,\,\,\,\,\,\,\,\,\,\,\,\,\,\,\,\,\, = \,\frac{{\sqrt {\gamma ^2  - 1} \,k_1 \left( s \right)k_3 \left( s \right)}}{{\varphi '\left( s \right)\sqrt {\left( {\gamma k_1 \left( s \right) - k_2 \left( s \right)} \right)^2  - k_3^2 \left( s \right)\,} }} > 0 \\
 \end{array}
\]
for all $s \in L$. Therefore, $\overline C$ is a
$C^\infty-$special curve in $\mathbb{E}_2^4$ and the Frenet
(1,3)-normal plane at the corresponding point ${\bf \bar c}\left(
{\bar s} \right) = {\bf \bar c}\left( {\varphi \left( s \right)}
\right)$ of $\overline C$. Thus, $(C,\overline C )$ is a mate of
(1,3)-Bertrand curve in
$\mathbb{E}_2^4$. \\
Finally, the proof of the theorem is completed.
\end{proof}\\

\end {document}